\newtheorem{theorem}{Theorem}
\newtheorem{lemma}{Lemma}
\newtheorem{proposition}{Proposition}
\newtheorem{corollary}{Corollary}
\newtheorem{definition}{Definition}
\newtheorem{example}{Example}
\newtheorem{remark}{Remark}
\author{El-Mehdi Mehiri\orcidlink{0000-0002-7164-3658}\\
Mines Saint-Étienne, CMP, Department of Manufacturing Sciences and Logistics, 
F-13120 Gardanne, France.\\ \texttt{elmehdi.mehiri@emse.fr}\\
\texttt{mehiri314@gmail.com}}
\title{\textbf{Block-Separated Overpartitions: Fibonacci Structure and Euler Factorization}}
\date{}
\begin{document}
\maketitle

\begin{abstract}
\noindent We introduce and study \emph{block-separated overpartitions}, a constrained family of overpartitions in which no two consecutive distinct part-blocks are both overlined. This local restriction produces a new sequence that naturally interpolates between classical partitions and unrestricted overpartitions. We show that the internal decoration of distinct part-blocks is governed by Fibonacci-type combinatorics: once the set of distinct part-sizes is fixed, the admissible overlining patterns are counted by Fibonacci numbers. This leads to a symmetric-function expansion of the generating function and a two-state transfer-matrix formulation. After extracting the Euler product, we obtain normalized recurrences, second-order scalar recurrences, determinantal representations, and a continued-fraction description of finite truncations. Finally, we determine the asymptotic growth of the counting function, and prove that block-separated overpartitions share the same exponential scale as ordinary partitions, with a modified subexponential constant.
\end{abstract}

\textbf{Keywords:} Overpartitions;  partitions; Fibonacci numbers; transfer matrix; Euler product; continued fractions; asymptotics; generating function.
 
\textbf{MSC (2020):} 05A17; 11P81; 05A15; 11P82; 11P84.

% \begin{abstract}
%   We introduce and study a new restricted family of overpartitions, called
% block-separated overpartitions, in which no two consecutive distinct 
% part-size blocks may both be overlined.  
% Using a two-state transfer-matrix automaton, we derive a closed matrix-product 
% expression for the ordinary generating function, establish an Euler-type 
% factorization, and obtain an explicit normalized recurrence suitable for 
% computation of arbitrary coefficients.  
% We further prove that the possible overlining patterns on the distinct blocks 
% are counted by Fibonacci numbers, giving natural bijections with independent 
% sets on paths, pattern-avoiding binary words, and Fibonacci tilings.  

\section{Introduction}\label{sec:Introduction}

Overpartitions form a fundamental refinement of classical integer partitions:  
one may optionally overline the first occurrence of each distinct part.  
Originally studied in depth by Corteel and Lovejoy~\cite{corteel2004overpartitions}, 
overpartitions have since occupied an important place in   combinatorics.  For instance, a wide range of arithmetic properties of overpartitions --- as well as of overpartitions restricted to odd parts --- have been established by numerous authors. For further details and references, see~\cite{CHEN201562, chen2016ramanujan, chern2018some, hirschhorn2006arithmetic, kim2009short, nadji2025arithmetic, Ahmia202565}.

The overpartition function, denoted by $\overline{p}(n)$, is the number of overpartitions of
n, with the convention that $p(0) = 1$. For example, $\overline{p}(3) = 8$ because there are 8 overpartitions of 3, namely, 

\[
3,  \overline{3},  2 + 1,  \overline{2} + 1,  2 + \overline{1}, \overline{2} + \overline{1}, 1 + 1 + 1,  \text{and } \overline{1} + 1 + 1.
\]

Previous work has considered many structural restrictions on overpartitions and their
combinatorial statistics.  
Several authors studied difference conditions and gap restrictions on
overpartitions,  
others investigated congruence--restricted or arithmetic–progression
refinements,  
while further developments focused on bounded part–size or bounded weight
variants, 
as well as statistical refinements involving ranks, cranks, and related $q$-series.  
These restricted families often give rise to new generating functions with rich analytic
structure, including product expansions, mock–modular behavior, and recursive $q$-difference
equations.  
For detailed results and numerous examples of such restricted overpartition models, we refer
the reader to  
\cite{
Al-Saedi2019251,
Baruah2025,
Bringmann2015,
Chern2019,
Hanson2023,
Hirschhorn2020167,
Isnaini20191207,
Lin202059,
Naika20191,
Sills2010253,
Srivastava2020488,
Tang2023659}.

In this work we propose a constrained family, called \emph{block-separated overpartitions}, which forbids consecutive distinct part-blocks from both being overlined.
This mild local restriction creates a new combinatorial sequence that interpolates between classical partitions and unrestricted overpartitions, while preserving an elegant product-structure and a Fibonacci-type internal combinatorics.

The paper is organized as follows. In Section~\ref{sec:definiton} we introduce block-separated overpartitions and provide initial examples and numerical data. 
Section~\ref{sec:fibonacci} develops the Fibonacci-type combinatorics governing admissible overlining patterns. 
Section~\ref{sec:transfer_matrix} presents a two-state automaton and the associated transfer-matrix formulation. 
In Section~\ref{sec:generating_function} we derive the global generating function. 
Section~\ref{sec:Euler} establishes an Euler-type factorization and normalized recurrences. 
Sections~\ref{sec:Recurrence}–\ref{sec:ContinuerFraction} develop structural consequences, including second-order recurrences, determinantal representations, and a continued-fraction formulation. 
Section~\ref{sec:Asymptotics} determines the asymptotic growth of the number of block-separated overpartitions. 
Finally, Section~\ref{sec:conclusion} discusses conclusions and further research directions.

\section{Definitions and   Examples}\label{sec:definiton}

\begin{definition}
A \emph{partition} of a positive integer $n$ is a finite sequence $\lambda = (d_1^{m_1}, d_2^{m_2}, \dots, d_r^{m_r})$, where $d_1 > d_2 > \dots > d_r \ge 1$,  $m_i \in \mathbb{N}=\{1,2,\ldots\}$, and  
\[
\sum_{i=1}^r m_i d_i = n.
\]
Here $m_i$ denotes the \emph{multiplicity} of the part $d_i$.

An \emph{overpartition} of \(n\) is obtained by optionally overlining the first occurrence of each distinct part.
Denote by \(x_i \in \{0,1\}\) the indicator such that \(x_i = 1\) if and only if the first occurrence in the block \(d_i^{m_i}\) is overlined.

The overpartition \(\lambda\) is said to be a \emph{block-separated overpartition} if no two consecutive distinct blocks are both overlined, that is,
\[
x_i x_{i+1} = 0 \quad \text{for all } i=1,\ldots,r-1.
\]
We denote by \( \mathcal{B}(n) \) the set of all block-separated overpartitions of \( n \), and by
\[
\mathcal{F}(q) = \sum_{n\ge0} b(n) q^n,
\]
its ordinary generating function, where $b(n) = |\mathcal{B}(n)|$.
\end{definition}

\begin{example}
For \( n = 5 \), the $24$ classical overpartitions reduce to \(19\) block-separated ones.
For instance, \(2\!+\!2\!+\!\overline{1}\) is valid, but \(\overline{2}\!+\!2\!+\!\overline{1}\) is forbidden because both distinct blocks (\(2\) and \(1\)) are overlined.
Hence \(b(5)=19\).  The   $19$ admissible overpartitions for $n=5$ are:
\[
\begin{aligned}
&5,\quad \overline{5},\quad\  4+1,\quad \overline{4}+1,\quad 4+\overline{1},\quad 3+2,\quad \overline{3}+2,\quad 3+\overline{2},\quad 3+1+1,\quad \overline{3}+1+1,\quad 3+\overline{1}+1,\\[4pt]
&2+2+1,\quad \overline{2}+2+1,\quad 2+2+\overline{1},\quad 2+1+1+1,\quad \overline{2}+1+1+1,\quad 2+\overline{1}+1+1,\\[4pt]
&1+1+1+1+1,\quad \overline{1}+1+1+1+1.
\end{aligned}
\]

\end{example}

\begin{definition}
For each positive integer \( j \), let
\begin{equation}
    S_j(q) = \sum_{m\ge1} q^{jm} = \frac{q^j}{1-q^j}
\end{equation}
be the generating function for a block consisting of parts of size \( j \) appearing with multiplicity at least \(1\).
\end{definition}

We recall the \emph{infinite $q$-Pochhammer symbol}
\begin{equation}\label{eq:qpoch}
(q)_\infty
  \coloneqq
  \prod_{j=1}^{\infty} (1 - q^j).
\end{equation}

Before presenting the analytic properties of the generating functions, it is
instructive to compare the initial values of the three fundamental sequences
appearing in this work: the number of classical partitions $p(n)$, the number
of overpartitions $\overline{p}(n)$, and the number of block-separated
overpartitions $b(n)$.  
Table~\ref{tab:comparison-p-over-b-rotated} lists these values for
$0 \le n \le 10$.  
As expected, we have
\[
p(n) \le b(n) \le \overline{p}(n),
\]
since block-separated overpartitions interpolate between ordinary partitions
(no overlining) and unrestricted overpartitions (free overlining).  

\begin{table}[H]

\begin{tabular*}{\linewidth}{@{\extracolsep{\fill}} l|lllllllllll@{}}
\toprule
$n$ & 0 & $1$ & $2$ & $3$ & $4$ & $5$ & $6$ & $7$ & $8$ & $9$ & $10$ \\
\midrule
$p(n)$ 
& 1  & 1  & 2  & 3  & 5  & 7  & 11 & 15 & 22 & 30 & 42 \\[2mm]
$b(n)$ 
& 1  & 2  & 4  & 7  & 12 & 19 & 31 & 47 & 72 & 107 & 157 \\[2mm]
$\overline p(n)$ 
& 1  & 2  & 4  & 8  & 14 & 24 & 40 & 64 & 100 & 154 & 232 \\
\bottomrule
\end{tabular*}
\caption{Comparison of the number of partitions $p(n)$, overpartitions $\overline p(n)$,
and block-separated overpartitions $b(n)$ for $0 \le n \le 10$.}
\label{tab:comparison-p-over-b-rotated}
\end{table}

We recall the definition of the Fibonacci numbers. 
The Fibonacci sequence $\{F_n\}_{n\ge 0}$ is defined by the recurrence 
\begin{equation}\label{eq:Fibonacci}
    F_0 = 0, \ F_1 = 1,\quad \forall n\ge 2: F_n = F_{n-1} + F_{n-2}.
\end{equation}

\section{Fibonacci-type Internal Combinatorics}
\label{sec:fibonacci}

A fundamental feature of block-separated overpartitions is the appearance of the classical Fibonacci numbers in their internal structure. This phenomenon arises
from the fact that, once the distinct part-sizes have been chosen, the
overlining pattern on these blocks is governed by the constraint that no two
consecutive blocks may both be overlined. This turns the decoration process
into a problem of counting binary words with forbidden consecutive \(1\)'s, a
combinatorial problem whose solution is known to be Fibonacci-like.

\paragraph{Binary words avoiding the pattern \(\boldsymbol{11}\).}
Suppose a block-separated overpartition uses exactly \(r\) distinct part-sizes.
Label the corresponding blocks in decreasing order of size as positions
\(\{1,\ldots,r\}\). Each block may be either \emph{plain} or \emph{overlined},
and this choice can be encoded by a binary  word $w = w_1 w_2 \cdots w_r$, with $w_i \in \{0,1\}$, and  where \(w_i=1\) means that the block in position \(i\) is overlined. The
block-separated condition is exactly the constraint
\[
w_i w_{i+1} = 0 \quad\text{for all } 1\le i < r,
\]
i.e.\ the word \(w\) contains no consecutive \(1\)'s.

It is well known (and easy to prove by a standard recurrence argument) that the
number of length-\(r\) binary words with no occurrence of the pattern \(11\) is
the Fibonacci number \(F_{r+2}\). Equivalently, such a word may be viewed as a tiling of a board of length \(r\) by the tiles \(\boxed{0}\) and \(\boxed{10}\), representing plain and overlined blocks. Reading from left to right, every binary word of length \(r\) avoiding the pattern \(11\) admits a unique decomposition into these tiles, and this correspondence immediately implies the Fibonacci recurrence. This
bijection immediately yields the recurrence
\begin{equation}\label{eq:RecurenceFiboShifted}
    a_0 = 1,\ a_1 = 2,\quad \forall r\geq 2:a_r  =  a_{r-1} + a_{r-2},
\end{equation}
which is solved by \(a_r = F_{r+2}\).

\paragraph{Connection with independent sets on a path.}
An equivalent viewpoint is obtained by identifying the block positions
\(\{1,\ldots,r\}\) with the vertices of a path graph \(P_r\). Choosing which
blocks are overlined is equivalent to selecting an \emph{independent set} of
vertices (no two adjacent), because overlined blocks may not appear in
consecutive positions. The number of independent sets of size \(m\) in a path
of length \(r\) is
\[
\binom{r-m+1}{m},
\]
and summing over all \(m\) yields
\begin{equation}
    \sum_{m\ge0} \binom{r-m+1}{m} = F_{r+2}.
\end{equation}
This independent-set interpretation is particularly useful in the symmetric
function expansion given in Theorem \ref{thm:symm-expansion}.

The quantity \(F_{r+2}\) is exactly the number of legal independent subsets of a
path of length \(r\), i.e., the number of admissible decoration patterns for an
\(r\)-block part-size set.

\paragraph{Automaton and Fibonacci polynomials.}
Viewed through the two-state automaton of Figure~\ref{fig:automaton}, the
presence of Fibonacci structure is even more transparent. The automaton has two
states:
\[
0 = \text{``last block plain''},\qquad
1 = \text{``last block overlined''},
\]
and transitions
\[
0 \xrightarrow{\text{plain/absent}} 0,\qquad
0 \xrightarrow{\text{overlined}} 1,\qquad
1 \xrightarrow{\text{plain}} 0,\qquad
1 \xrightarrow{\text{absent}} 1.
\]
If one ignores the \(q\)-weights and only tracks plain/overlined choices
(i.e.\ ignores the ``absent'' transitions), then the transition rules reduce
to exactly the classical automaton that generates binary words avoiding the
pattern \(11\). When \(q\)-weights are reinstated, the automaton becomes a
weight-enriched version of the Fibonacci automaton, and the polynomial
refinement
\begin{equation}
    F_{r+2}(y) = \sum_{m\ge0} \binom{r-m+1}{m} y^m
\end{equation}
appears naturally when we introduce a variable \(y\) marking the number of
overlined blocks.

\paragraph{Consequences for the  generating function.}
The Fibonacci decoration mechanism leads directly to the the following representation.
\begin{theorem}\label{thm:symm-expansion}
We have the symmetric-function expansion
\begin{equation}\label{eq:symm-expansion}
\mathcal{F}(q)=\sum_{r\ge 0} F_{r+2}\, e_r\!\bigl(S_1(q),S_2(q),S_3(q),\dots\bigr),
\end{equation}
where 
$e_r$ is the $r$-th elementary symmetric function:
\[
e_r(x_1,x_2,\dots)=\sum_{1\le i_1<i_2<\cdots<i_r} x_{i_1}x_{i_2}\cdots x_{i_r},
\qquad e_0=1.
\]
\end{theorem}

\begin{proof}
Fix $n\ge 0$. We count block-separated overpartitions of total size $n$ by conditioning on the
number $r$ of distinct part-sizes that appear. (Equivalently, $r$ is the number of
distinct blocks in the block form of the partition.)

Let $\{j_1<j_2<\cdots<j_r\}\subset\mathbb{N}$ be the set of distinct part-sizes used.
For each chosen size $j_t$, the corresponding block consists of $m_t\ge 1$ copies of $j_t$,
contributing weight $q^{m_t j_t}$. Summing over all multiplicities $m_t\ge 1$ gives the factor
\[
S_{j_t}(q)=\sum_{m\ge 1} q^{mj_t}=\frac{q^{j_t}}{1-q^{j_t}}.
\]
Hence, for a fixed set $\{j_1,\dots,j_r\}$, the total contribution of all multiplicity choices is
\[
\prod_{t=1}^r S_{j_t}(q).
\]
Summing over all $r$-subsets of $\mathbb{N}$ yields exactly the elementary symmetric function
\[
\sum_{1\le j_1<\cdots<j_r}\ \prod_{t=1}^r S_{j_t}(q)
= e_r\!\bigl(S_1(q),S_2(q),S_3(q),\dots\bigr).
\]

Once the $r$ distinct part-sizes are fixed, the only remaining choice is which of the $r$ blocks
are overlined. Write the blocks in decreasing order of part-size; this yields an ordered list of
$r$ block positions. The block-separated condition is precisely that no two consecutive block
positions are both overlined. Therefore the admissible overlining patterns are in bijection with
binary words of length $r$ avoiding the factor $11$, where $1$ indicates ``overlined'' and $0$
indicates ``plain''.

It is standard (and follows from the Recurrence \eqref{eq:RecurenceFiboShifted})
that the number of such binary words is $F_{r+2}$. Importantly, this factor depends only on $r$,
not on the chosen part-sizes or multiplicities.

For each fixed $r\ge 0$, the total generating function contribution of block-separated
overpartitions with exactly $r$ distinct part-sizes is thus
\[
F_{r+2}\,e_r\!\bigl(S_1(q),S_2(q),S_3(q),\dots\bigr).
\]
Summing over all $r\ge 0$ gives \eqref{eq:symm-expansion}, completing the proof.
\end{proof}

Thus the global structure of block-separated overpartitions is a convolution of
Euler-type partition structure (via the $S_j(q)$) with Fibonacci-type internal
combinatorics (via the $F_{r+2}$). This hybrid structure explains both the
regularity of the generating function and its strong connections to classical
objects such as independent sets, pattern-avoiding words, and tilings.

\section{Transfer-matrix Formulation with a Two-state Automaton}\label{sec:transfer_matrix}

In this section we give a detailed transfer-matrix description of
block-separated overpartitions, based on the per-size block generating functions
\(S_j(q)\) and a simple two-state automaton that encodes the
constraint ``no two consecutive distinct blocks are both overlined''.

\begin{definition}\label{def:states}
We scan the possible part-sizes in increasing order
\[
j = 1,2,3,\ldots,
\]
and decide, at each \(j\), whether to use the part \(j\) in the partition (and
if so, whether the block of \(j\)'s is overlined or not). We maintain a
\emph{state} in \(\{0,1\}\) defined as follows:
\begin{itemize}
  \item State \(0\): the last \emph{present} block (if any) is \emph{not overlined};
  \item State \(1\): the last present block \emph{is overlined}.
\end{itemize}
Thus state \(1\) represents the situation where the most recent distinct
part-size used so far was overlined, so the next present block is forbidden to be overlined.
\end{definition}

\begin{remark}
If we skip some part-size \(j\) (i.e., no parts of size \(j\) appear), the state
does not change, since no new block is created. Only when we actually use the
part-size \(j\) do we update the state according to whether the block is
overlined or not.
\end{remark}

\begin{proposition}\label{prop:transition}
Fix a part-size \(j\ge1\). At this size, there are three combinatorial choices:
\[
\begin{array}{ll}
\text{\textup{(A)} Block $j$ absent:} &
\text{no parts of size } j \text{ appear},\\[3pt]
\text{\textup{(B)} Block $j$ present, plain:} &
\text{at least one } j \text{ appears, but the first $j$ is not overlined},\\[3pt]
\text{\textup{(C)} Block $j$ present, overlined:} &
\text{at least one } j \text{ appears, and the first $j$ is overlined}.
\end{array}
\]
The contributions of these choices to the \(q\)-weights and the state transitions
are:
\begin{itemize}
  \item[(A)] Block $j$ absent: Weight 1  and  state unchanged.
  \item[(B)] Block $j$ present and plain:  
  The block consists of \(m\ge1\) parts of size \(j\),
  so the total size contribution is \(mj\), and the generating function is $S_j(q)$. 
  
  Since this block is \emph{not} overlined, the new last present block is plain,
  and the next state is \(0\), regardless of the current state.
  \item[(C)] Block $j$ present and  overlined: 
  Again the multiplicity \(m\ge1\) contributes weight \(S_j(q)\).
  However, because the first occurrence is overlined, this choice is allowed
  only if the current state is \(0\), i.e., the previously present block was
  not overlined. The next state is then \(1\).
\end{itemize}
\end{proposition}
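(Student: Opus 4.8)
The plan is to verify the proposition in three stages, matching its three assertions: first that the choices (A), (B), (C) form an exhaustive, mutually exclusive partition of the local possibilities at size $j$; second that each choice carries the stated $q$-weight; and third that the state updates correctly encode the block-separation constraint $x_i x_{i+1}=0$.

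First I would dispose of the trichotomy and the weights, which are routine. At a fixed size $j$, a partition either uses no part equal to $j$ (case (A)) or uses at least one, in which case the block is present; in the latter situation the first occurrence of $j$ is either left plain (case (B)) or overlined (case (C)). These three alternatives are plainly mutually exclusive and exhaust every possibility, so the local decision at each size is well defined. For the weights, absence contributes nothing to the size $n$, hence the generating-function weight $q^{0}=1$; if the block is present with multiplicity $m\ge1$, it contributes $mj$ to $n$, so summing the monomials $q^{jm}$ over all $m\ge1$ gives $\sum_{m\ge1}q^{jm}=S_j(q)$ by definition. Since the multiplicity factor is insensitive to whether the first occurrence is overlined, both (B) and (C) carry the same weight $S_j(q)$.

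The remaining and most delicate part is the state bookkeeping, and here I would first make explicit the key observation that skipping an absent size creates no new distinct block and hence leaves the state unchanged --- exactly the content of the Remark. Consequently the automaton's ``previous present block'' coincides with the adjacent smaller distinct part of the partition, even when arbitrarily many empty sizes separate two present blocks in the scan. Because scanning sizes in increasing order lists the distinct parts as $d_r,d_{r-1},\ldots,d_1$, each consecutive pair of present blocks is exactly a pair $(d_{i+1},d_i)$, so the constraint $x_i x_{i+1}=0$ is precisely the prohibition against two consecutive present blocks both being overlined. Under (B) the new last block is plain, forcing the next state to $0$ regardless of the current one; under (C) the new last block is overlined, so the next state is $1$, and this would break the constraint exactly when the immediately preceding present block was overlined, i.e.\ when the current state is $1$ --- hence (C) is admissible only from state $0$. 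This reproduces all three stated transitions.

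The step I expect to be the main obstacle is precisely this identification of ``consecutive present blocks in the scan'' with ``consecutive distinct parts in the partition'': because the scan visits every integer size while only present sizes create blocks, one must argue that the gaps of absent sizes do not disturb the adjacency structure recorded by the state. Once one checks that absent sizes leave the state intact, the transition rules follow by direct bookkeeping and the weights are immediate from the definition of $S_j(q)$.
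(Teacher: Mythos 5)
Your proposal is correct and follows essentially the same route as the paper: the trichotomy and the weights $1$ and $S_j(q)$ are exactly the paper's bookkeeping, and your key observation that absent sizes leave the state unchanged --- so that consecutive present blocks in the scan coincide with consecutive distinct parts $d_{i+1},d_i$ of the partition --- is precisely the content of the paper's Remark following Definition~\ref{def:states}. You merely make explicit the verification the paper leaves implicit in stating the proposition, so there is nothing to flag.
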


\begin{example}\label{ex:local-transitions}
To illustrate, consider the first few part-sizes with truncation in \(q\) for clarity.
\begin{itemize}
  \item At \(j=1\),
  \[
  S_1(q) = q + q^2 + q^3 + \cdots.
  \]
  From state \(0\):
  \begin{itemize}
    \item block \(1\) absent: stay in state \(0\), weight \(1\);
    \item block \(1\) present, plain: go to state \(0\), weight \(S_1(q)\);
    \item block \(1\) present, overlined: go to state \(1\), weight \(S_1(q)\).
  \end{itemize}
  From state \(1\):
  \begin{itemize}
    \item block \(1\) absent: stay in state \(1\), weight \(1\);
    \item block \(1\) present, plain: go to state \(0\), weight \(S_1(q)\);
    \item block \(1\) present, overlined: forbidden (would create two consecutive
    overlined blocks).
  \end{itemize}

  \item At \(j=2\),
  \[
  S_2(q) = q^2 + q^4 + q^6 + \cdots.
  \]
  The same pattern holds, with the understanding that any present block of size
  \(2\) contributes at least \(q^2\) to the size. For example, from state \(1\) we
  may choose:
  \begin{itemize}
    \item block \(2\) absent: stay in state \(1\), weight \(1\);
    \item block \(2\) present, plain: go to state \(0\), weight \(S_2(q)\);
    \item block \(2\) present, overlined: forbidden.
  \end{itemize}
\end{itemize}
In each case, the rule ``no two consecutive blocks overlined'' is enforced by disallowing
the transition from state \(1\) to state \(1\) via an overlined block.

\end{example}

The transitions described in  Proposition~\ref{prop:transition} can be conveniently encoded
in a \(2\times2\) matrix.

\begin{lemma}\label{def:Mj}
Let \(M_j(q)\) be the matrix whose entry \(M_j(a,b)\) is the total generating
function weight of all ways to move from state \(a\) at size \(j-1\) to state \(b\)
after processing choices at size \(j\). Then:
\begin{equation}
    M_j(q) =
\begin{pmatrix}
M_j(0,0) & M_j(0,1) \\[3pt]
M_j(1,0) & M_j(1,1)
\end{pmatrix}=
\begin{pmatrix}
\dfrac{1}{1-q^j} & \dfrac{q^j}{1-q^j} \\[5pt]
\dfrac{q^j}{1-q^j} & 1
\end{pmatrix}.
\end{equation}
\end{lemma}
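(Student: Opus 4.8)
The plan is to read off each of the four entries directly from the trichotomy established in Proposition~\ref{prop:transition}, summing the generating-function weights of all admissible local choices that send state $a$ at size $j-1$ to state $b$ after the decisions at size $j$. Because the three options (A), (B), (C) are mutually exclusive and exhaustive for a fixed part-size, no double counting occurs, and each matrix entry is simply the sum of the weights of those options compatible with the prescribed source and target states.

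First I would compute the two entries in the row indexed by state $0$. A transition $0\to 0$ is realized either by leaving the block absent (option (A), weight $1$) or by inserting a plain block (option (B), weight $S_j(q)$), since both keep or reset the running state to $0$; hence $M_j(0,0)=1+S_j(q)$. A transition $0\to 1$ requires precisely the overlined option (C), which is permitted from state $0$ and carries weight $S_j(q)$, so $M_j(0,1)=S_j(q)$. The only algebraic step is the simplification
\[
1+S_j(q)=1+\frac{q^j}{1-q^j}=\frac{1}{1-q^j},
\]
which one may also recognize as the full geometric series $\sum_{m\ge 0}q^{jm}$ counting ``absent or plain'' multiplicities with no overline.

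Next I would treat the row indexed by state $1$. From state $1$ the overlined option (C) is forbidden, so the only way to reach a present plain block and thus return to state $0$ is option (B), giving $M_j(1,0)=S_j(q)$; and the only way to remain in state $1$ is option (A) (block absent, weight $1$), so $M_j(1,1)=1$. Collecting the four entries and substituting $S_j(q)=q^j/(1-q^j)$ yields exactly the displayed matrix.

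The argument is essentially a bookkeeping exercise, so I expect no serious obstacle; the one point that warrants care is the orientation convention. I would make sure throughout that rows are indexed by the source state $a$ and columns by the target state $b$, so that the forbidden $1\to 1$ overlined transition lands in the $(1,1)$ entry — leaving it equal to $1$ rather than $1/(1-q^j)$ — and is not accidentally transposed into the $(0,0)$ slot. Checking the small cases in Example~\ref{ex:local-transitions} against the resulting entries provides a quick consistency verification.
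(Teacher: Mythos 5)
Your proposal is correct and follows essentially the same approach as the paper's proof: a case-by-case reading of the four entries from the trichotomy in Proposition~\ref{prop:transition}, with the same simplification $1+S_j(q)=1/(1-q^j)$ for the $(0,0)$ entry. Your added remark about fixing the row-as-source, column-as-target convention is a sensible precaution but does not change the argument.
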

\begin{proof}
    By Proposition~\ref{prop:transition} we obtain:
\begin{itemize}
  \item From state \(0\) to state \(0\): either block \(j\) is absent
  (weight \(1\)) or present but plain (weight \(S_j(q)\)):
  \[
  M_j(0,0) = 1+S_j(q) = \frac{1}{1-q^j}.
  \]
  \item From state \(0\) to state \(1\): block \(j\) must be present and overlined
  (weight \(S_j(q)\)):
  \[
  M_j(0,1) = S_j(q) = \frac{q^j}{1-q^j}.
  \]
  \item From state \(1\) to state \(0\): the only possibility is a present, plain block
  (weight \(S_j(q)\)):
  \[
  M_j(1,0) = S_j(q) = \frac{q^j}{1-q^j}.
  \]
  \item From state \(1\) to state \(1\): the only possibility is absence of block \(j\)
  (weight \(1\)):
  \[
  M_j(1,1) = 1.
  \]
\end{itemize}
Therefore
\[
M_j(q) =
\begin{pmatrix}
1 + S_j(q) & S_j(q) \\[3pt]
S_j(q) & 1
\end{pmatrix}
=
\begin{pmatrix}
\dfrac{1}{1-q^j} & \dfrac{q^j}{1-q^j} \\[5pt]
\dfrac{q^j}{1-q^j} & 1
\end{pmatrix}.\qedhere
\]
\end{proof}

Figure~\ref{fig:automaton} provides a  visual encoding of all permitted
local transitions at a given part-size \(j\). The two nodes represent the
machine states defined in Definition~\ref{def:states}:
\begin{itemize}
  \item The node labelled \(0\) represents the situation in which the last
  present block (if any) is plain (not overlined). This is the ``safe''
  state from which both plain and overlined choices are allowed at size~\(j\).
  \item The node labelled \(1\) represents the situation in which the last
  present block is overlined. From this state, placing another overlined
  block at size \(j\) is forbidden, because it would violate the defining
  condition of block-separated overpartitions.
\end{itemize}

Each directed arrow has a label indicating the combinatorial choice at size
\(j\) together with its generating function weight.

\textbf{Loops}:  
\begin{itemize}
  \item The loop at state \(0\) has two contributions:
  \begin{itemize}
    \item \emph{absent block}: no parts of size \(j\) appear, weight \(1\);
    \item \emph{plain block}: a block of size \(j\) appears with multiplicity
    \(\ge1\), weight \(S_j(q)\). Because this block is plain, the new state
    remains \(0\).
  \end{itemize}
  \item The loop at state \(1\) has only one contribution:
  \[
  \text{absent block: weight } 1.
  \]
  Indeed, if the previous block was overlined, then placing a plain block at
  size \(j\) would move to state \(0\), and placing an overlined block would be
  forbidden. Therefore, the only way to remain in state \(1\) is to skip size
  \(j\).
\end{itemize}

\textbf{Arcs}:  
\begin{itemize}
  \item The arc \(0 \to 1\) corresponds to choosing an overlined block
  at size \(j\). This is allowed only from state \(0\), because the last present
  block must not be overlined. The weight is \(S_j(q)\), the generating function
  for a block of size \(j\) with multiplicity \(\ge1\).
  \item The arc \(1 \to 0\) corresponds to choosing a plain block at size
  \(j\). This is the only way to ``reset'' from an overlined-block state back to
  a plain-block state. The weight is again \(S_j(q)\).
\end{itemize}

Altogether, the automaton encodes precisely the local combinatorics required at
each part-size \(j\): the choice of absent/plain/overlined block, the correct
\(q\)-weights through the factors \(S_j(q)\), and the forbidden transition that
ensures that no two consecutive blocks are overlined. By scanning sizes
\(j=1,2,3,\ldots\), the global structure of block-separated overpartitions is
recovered via the transfer-matrix product of Theorem~\ref{thm:matrix-detailed}.

\begin{figure}[H]
    \centering

\tikzset{every picture/.style={line width=0.75pt}} %set default line width to 0.75pt        

\begin{tikzpicture}[x=0.75pt,y=0.75pt,yscale=-1,xscale=1]
%uncomment if require: \path (0,300); %set diagram left start at 0, and has height of 300

%Shape: Circle [id:dp5285107686606212] 
\draw   (217,132.5) .. controls (217,122.84) and (224.84,115) .. (234.5,115) .. controls (244.16,115) and (252,122.84) .. (252,132.5) .. controls (252,142.16) and (244.16,150) .. (234.5,150) .. controls (224.84,150) and (217,142.16) .. (217,132.5) -- cycle ;

%Shape: Circle [id:dp8320390387287899] 
\draw   (417,132.5) .. controls (417,122.84) and (424.84,115) .. (434.5,115) .. controls (444.16,115) and (452,122.84) .. (452,132.5) .. controls (452,142.16) and (444.16,150) .. (434.5,150) .. controls (424.84,150) and (417,142.16) .. (417,132.5) -- cycle ;

%Curve Lines [id:da4426810034187221] 
\draw    (234.5,115) .. controls (273.7,85.6) and (394.84,87.59) .. (432.31,113.39) ;
\draw [shift={(434.5,115)}, rotate = 218.23] [fill={rgb, 255:red, 0; green, 0; blue, 0 }  ][line width=0.08]  [draw opacity=0] (7.14,-3.43) -- (0,0) -- (7.14,3.43) -- (4.74,0) -- cycle    ;
%Curve Lines [id:da992900271907903] 
\draw    (237,151.76) .. controls (279.48,179.88) and (400.51,176.77) .. (434.5,150) ;
\draw [shift={(234.5,150)}, rotate = 36.87] [fill={rgb, 255:red, 0; green, 0; blue, 0 }  ][line width=0.08]  [draw opacity=0] (7.14,-3.43) -- (0,0) -- (7.14,3.43) -- (4.74,0) -- cycle    ;
%Curve Lines [id:da9905165275218546] 
\draw    (217,129.5) .. controls (162.65,134.47) and (192.3,82.22) .. (224.1,115.43) ;
\draw [shift={(226.05,117.57)}, rotate = 229.09] [fill={rgb, 255:red, 0; green, 0; blue, 0 }  ][line width=0.08]  [draw opacity=0] (7.14,-3.43) -- (0,0) -- (7.14,3.43) -- (4.74,0) -- cycle    ;
%Curve Lines [id:da5808596398008536] 
\draw    (452.55,135.34) .. controls (506.89,140.31) and (479.99,84.86) .. (448.31,117.93) ;
\draw [shift={(446.36,120.07)}, rotate = 310.91] [fill={rgb, 255:red, 0; green, 0; blue, 0 }  ][line width=0.08]  [draw opacity=0] (7.14,-3.43) -- (0,0) -- (7.14,3.43) -- (4.74,0) -- cycle    ;

% Text Node
\draw (228.5,124.9) node [anchor=north west][inner sep=0.75pt]  []  {$0$};
% Text Node
\draw (428.5,124.9) node [anchor=north west][inner sep=0.75pt]  []  {$1$};
% Text Node
\draw (105,88) node [anchor=north west][inner sep=0.75pt]  [font=\footnotesize,] [align=left] {absent or plain :\\ $\displaystyle 1$or $\displaystyle S_{j}( q)$};
% Text Node
\draw (295,74) node [anchor=north west][inner sep=0.75pt]  [font=\footnotesize,] [align=left] {overlined: $\displaystyle S_{j}( q)$};
% Text Node
\draw (308,182) node [anchor=north west][inner sep=0.75pt]  [font=\footnotesize,] [align=left] {plain: $\displaystyle S_{j}( q)$};
% Text Node
\draw (466,90) node [anchor=north west][inner sep=0.75pt]  [font=\footnotesize,] [align=left] {absent: $\displaystyle 1$};

\end{tikzpicture}    
\caption{Two-state automaton governing local transitions at part-size \(j\).}
\label{fig:automaton}
\end{figure}
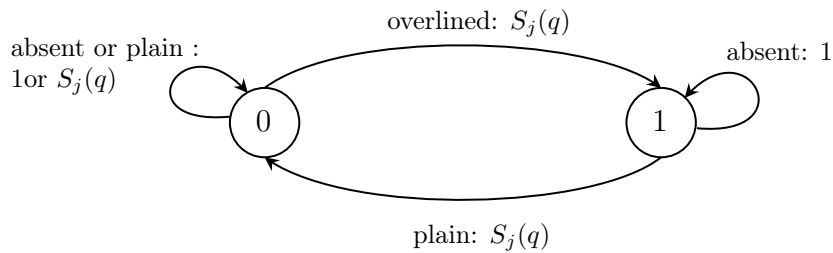

\section{Generating Function}\label{sec:generating_function}
We now assemble the local combinatorial transitions into a global generating function. 
The transfer-matrix formulation allows us to encode all admissible configurations across part-sizes $j=1,2,3,\dots$ via an infinite matrix product. 
This leads to a compact expression for the ordinary generating function $F(q)$.

The global generating function is obtained by composing all local transitions
for \(j=1,2,3,\ldots\). We start with no blocks and hence in state \(0\), encoded
by the row vector \((1,0)\). After processing all sizes, we sum over both
possible final states, encoded by the column vector \( \begin{pmatrix}1\\1
\end{pmatrix}\).

\begin{theorem}\label{thm:matrix-detailed}
The ordinary generating function of block-separated overpartitions is
\begin{equation}
    \mathcal{F}(q) =
\begin{pmatrix} 1 & 0 \end{pmatrix}
\left(\prod_{j\ge1} M_j(q)\right)
\begin{pmatrix} 1 \\ 1 \end{pmatrix},
\end{equation}
where \(M_j(q)\) is the transfer matrix of Lemma~\ref{def:Mj}.
\end{theorem}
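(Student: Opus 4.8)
The plan is to read the matrix product combinatorially as a sum over weighted walks in the two-state automaton and to identify those walks with block-separated overpartitions. First I would invoke the standard fact underlying the transfer-matrix method: for any finite sequence of matrices, the $(a,b)$ entry of the product $\prod_{j=1}^{N} M_j(q)$ equals $\sum \prod_{j=1}^{N} M_j(s_{j-1},s_j)$, where the outer sum ranges over all state sequences $s_0 = a, s_1, \ldots, s_N = b$. Consequently the scalar $(1,0)\bigl(\prod_{j=1}^{N} M_j(q)\bigr)(1,1)^{T}$ is the generating function for all walks that begin in state $0$ (selected by the row vector) and terminate in either state (summed by the column vector).

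Second, I would set up the weight-preserving correspondence with overpartitions. By Lemma~\ref{def:Mj} the entry $M_j(a,b)$ is itself the generating function that sums, over all local choices (A), (B), (C) at size $j$ compatible with the transition $a \to b$ in the sense of Proposition~\ref{prop:transition}, the $q$-weight of that choice, with the factor $S_j(q) = q^j/(1-q^j)$ encoding the multiplicity $m \ge 1$ of a present block. Expanding the matrix product therefore produces a sum over all globally admissible choice-sequences $c_1, c_2, \ldots$, each weighted by $q$ raised to the total size. Such admissible sequences are in weight-preserving bijection with block-separated overpartitions: the present sizes furnish the distinct parts $d_1 > \cdots > d_r$ with their multiplicities, the plain/overlined labels furnish the indicators $x_i$, and the forbidden $1 \to 1$ overlined transition is exactly the constraint $x_i x_{i+1} = 0$. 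Starting in state $0$ correctly allows the smallest present block to be overlined, while the column vector $(1,1)^{T}$ sums over the two admissible terminal states.

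Third, I would pass from finite partial products to the infinite product. Since the lowest-degree term of $S_j(q)$ is $q^j$, each factor satisfies $M_j(q) = I + O(q^j)$ as a formal power series, so for $j > n$ the matrix $M_j$ acts as the identity modulo $q^{n+1}$. Hence the coefficient of $q^n$ in $\prod_{j=1}^{N} M_j(q)$ is independent of $N$ once $N \ge n$, which shows that $\prod_{j\ge1} M_j(q)$ is well-defined entrywise in $\mathbb{Z}[[q]]$ and that extracting $[q^n]$ from $(1,0)\bigl(\prod_{j\ge1} M_j(q)\bigr)(1,1)^{T}$ counts block-separated overpartitions of $n$ with no bound on the largest part, namely $b(n)$.

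The step I expect to require the most care is verifying that the single state variable faithfully encodes the separation constraint. In particular I would check that an absent size leaves the state unchanged, so that the overline status of the most recent present block is preserved across gaps, and I would confirm that reading sizes in increasing order $j = 1, 2, 3, \ldots$ yields the same adjacency relation among present blocks as the decreasing indexing $d_1 > \cdots > d_r$, since ``consecutive present blocks'' is a symmetric relation. Once these semantic checks are in place, the matrix identity and the formal-convergence argument are routine.
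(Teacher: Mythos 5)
Your proposal is correct and takes essentially the same approach as the paper: both read the matrix product as the generating function of weighted walks in the two-state automaton, with the row vector $(1,0)$ selecting the initial state, the column vector $\bigl(\begin{smallmatrix}1\\1\end{smallmatrix}\bigr)$ summing over terminal states, and the walks identified weight-preservingly with block-separated overpartitions. Your third step, checking that $M_j(q)=I+O(q^j)$ so the infinite product is well-defined coefficientwise, is a rigor point the paper leaves implicit, but it does not change the route of the argument.
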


\begin{proof}
By construction, for each fixed size \(j\), the matrix \(M_j(q)\) encodes all
legal transitions between states \(0\) and \(1\) with the correct \(q\)-weights:
each allowed choice (absent, present plain, present overlined) contributes a
monomial \(q^{\text{contribution}}\) multiplied by the generating function
\(S_j(q)\) when the block is present, and the block-separated condition is
enforced by disallowing the transition \(1\to1\) via an overlined block.

The product \(\prod_{j\ge1} M_j(q)\) is taken in increasing order of \(j\), and
matrix multiplication corresponds to concatenating choices at each size. The
row vector \((1,0)\) represents the unique starting configuration before any
part-size is considered: no blocks have been chosen and the last present block
is (vacuously) plain. Multiplying from the left by \((1,0)\) propagates all
legal sequences of choices across all sizes. Finally, multiplying on the right
by \(\begin{pmatrix}1\\1\end{pmatrix}\) sums the contributions from
both possible final states, since the final state (whether the last block is
overlined or not) does not matter for the total count. Thus the coefficient
of \(q^n\) in
\[
\begin{pmatrix} 1 & 0 \end{pmatrix}
\left(\prod_{j\ge1} M_j(q)\right)
\begin{pmatrix} 1 \\ 1 \end{pmatrix}
\]
is precisely the number of block-separated overpartitions of \(n\).
\end{proof}

\section{Euler Factorization}\label{sec:Euler}
A central structural feature of the transfer matrix is the presence of the classical Euler factor $(1-q^j)^{-1}$ in its upper-left entry. 
Extracting these factors leads to a normalized formulation and reveals a clean Euler-type factorization of the generating function.

\begin{theorem}\label{thm:euler-factorization} We have 
\begin{equation}
    \mathcal{F}(q)
= \frac{1}{(q)_\infty}
\left(
\widehat{F}_0^{(\infty)}(q) + \widehat{F}_1^{(\infty)}(q)
\right), 
\end{equation}
where the modified sequences $\widehat{F}_0^{(n)}$ and $\widehat{F}_1^{(n)}$ satisfy the following coupled recurrences  
\begin{align}
\widehat{F}_0^{(n)} &= \widehat{F}_0^{(n-1)} + q^n\widehat{F}_1^{(n-1)},\label{eq:FirstRec}\\
\widehat{F}_1^{(n)} &= q^n \widehat{F}_0^{(n-1)} + (1-q^n)\widehat{F}_1^{(n-1)},\label{eq:SecondRec}
\end{align}
with initial conditions \(\widehat{F}_0^{(0)}=1,\ \widehat{F}_1^{(0)}=0.\)
\end{theorem}
\begin{proof}

% {\color{red}
% \begin{align*}
% \widehat{F}_0^{(n)}-\widehat{F}_1^{(n-1)} &= \widehat{F}_0^{(n-1)} ,\\
% \widehat{F}_1^{(n)} &= q^n \widehat{F}_0^{(n)} + (1-2q^n)\widehat{F}_1^{(n-1)},
% \end{align*}
% }

The upper-left entry of the transfer matrix $M_j(q)$ is simply a   classical Euler factor $\dfrac{1}{1-q^j}$, which is precisely the \(j\)-th factor in the Euler product (or the infinite $q$-Pochhammer symbol) \eqref{eq:qpoch}.

Extracting these factors from the infinite product
\(\prod_{j\ge1} M_j(q)\) leads to a clean normalization of the transfer-matrix
recurrence.

To make this precise, define the \emph{normalized} matrices
\begin{equation}\label{eq:NormlizedMatrix}
    \widehat{M}_j(q)
 = 
(1-q^j)\, M_j(q)
=
\begin{pmatrix}
1 & q^j \\[4pt]
q^j & 1-q^j
\end{pmatrix}.
\end{equation}
Then for any row vector \(V=(V_0,V_1)\) we have
\[
V M_j(q)
=
\frac{1}{1-q^j}\, V \widehat{M}_j(q).
\]
It follows that
\[
(1,0)\!\left(\prod_{j\ge1} M_j(q)\right)
=
\left(\prod_{j\ge1} \frac{1}{1-q^j}\right)
(1,0)\!\left(\prod_{j\ge1} \widehat{M}_j(q)\right)
=
\frac{1}{(q)_\infty}\,
(1,0)\!\left(\prod_{j\ge1} \widehat{M}_j(q)\right).
\]

Let \(\bigl(\widehat{F}_0^{(n)}(q),\widehat{F}_1^{(n)}(q)\bigr)\) denote the row vector
obtained after multiplying the normalized matrices \(\widehat{M}_1(q),\ldots,\widehat{M}_n(q)\), that is, the \emph{normalized row-vectors} 
\begin{equation}\label{eq:NormlizedVector}
    \bigl(\widehat{F}_0^{(n)},\widehat{F}_1^{(n)}\bigr)
=
(1,0) \,\widehat{M}_1(q)\,\widehat{M}_2(q)\cdots\widehat{M}_n(q).
\end{equation}
Therefore, we have the recurrence
\begin{equation}
    \bigl(\widehat{F}_0^{(n)},\widehat{F}_1^{(n)}\bigr)
=
\bigl(\widehat{F}_0^{(n-1)},\widehat{F}_1^{(n-1)}\bigr)
\begin{pmatrix}
1 & q^n \\[4pt]
q^n & 1-q^n
\end{pmatrix},
\end{equation}
which yields the component-wise system
\begin{align*}
\widehat{F}_0^{(n)}
&=
\widehat{F}_0^{(n-1)} + q^n\,\widehat{F}_1^{(n-1)},\\[4pt]
\widehat{F}_1^{(n)}
&=
q^n\,\widehat{F}_0^{(n-1)} + (1-q^n)\,\widehat{F}_1^{(n-1)}.
\end{align*}
The initial conditions come from \((1,0)\) before any matrices are applied:
\[
\widehat{F}_0^{(0)} = 1,\qquad
\widehat{F}_1^{(0)} = 0.
\]

Finally, summing over both terminal states gives
\[
\mathcal{F}(q)
=
\begin{pmatrix}1&0\end{pmatrix}
\!\left(\prod_{j\ge1} M_j(q)\right)
\begin{pmatrix}1\\1\end{pmatrix}
=
\frac{1}{(q)_\infty}\,
\Bigl(\widehat{F}_0^{(\infty)}(q)+\widehat{F}_1^{(\infty)}(q)\Bigr),
\]
proving the normalized Euler-type factorization.
\end{proof}

\begin{example}[Step-by-step multiplication for \(j=1,2,3\)]\label{ex:M123}
To see Theorem~\ref{thm:matrix-detailed} in action, we truncate the series to
order \(q^3\) and explicitly compute the contributions of part-sizes
\(j=1,2,3\).

First, we truncate
\[
S_1(q) = q + q^2 + q^3 + O(q^4),\qquad
S_2(q) = q^2 + O(q^4),\qquad
S_3(q) = q^3 + O(q^4),
\]
and form
\[
M_1(q) =
\begin{pmatrix}
1+S_1(q) & S_1(q)\\
S_1(q) & 1
\end{pmatrix}
=
\begin{pmatrix}
1+q+q^2+q^3 & q+q^2+q^3\\
q+q^2+q^3 & 1
\end{pmatrix}
+ O(q^4),
\]
\[
M_2(q) =
\begin{pmatrix}
1+S_2(q) & S_2(q)\\
S_2(q) & 1
\end{pmatrix}
=
\begin{pmatrix}
1+q^2 & q^2\\
q^2 & 1
\end{pmatrix}
+ O(q^4),
\qquad
M_3(q) =
\begin{pmatrix}
1+q^3 & q^3\\
q^3 & 1
\end{pmatrix}
+ O(q^4).
\]

\begin{itemize}
    \item Step 1: process size \(j=1\). 
    Starting from \((1,0)\), we obtain
\[
(1,0)\,M_1(q)
=
\bigl(1+S_1(q),\,S_1(q)\bigr)
=
\bigl(1+q+q^2+q^3, q+q^2+q^3\bigr) + O(q^4).
\]
The first component corresponds to all configurations after considering size \(1\)
and ending in state \(0\); the second component corresponds to those ending in
state \(1\).
\item Step 2: process size \(j=2\). Now multiply by \(M_2(q)\):
\[
(1,0)\,M_1(q)\,M_2(q)
=
\bigl(F_0^{(2)}(q),\,F_1^{(2)}(q)\bigr),
\]
where, after truncation at \(q^3\),
\[
F_0^{(2)}(q) = 1 + q + 2q^2 + 3q^3 + O(q^4),\qquad
F_1^{(2)}(q) = q + 2q^2 + 2q^3 + O(q^4).
\]
\item Step 3: process size \(j=3\). 
Multiplying once more by \(M_3(q)\) gives
\[
(1,0)\,M_1(q)\,M_2(q)\,M_3(q)
=
\bigl(F_0^{(3)}(q),\,F_1^{(3)}(q)\bigr),
\]
with
\[
F_0^{(3)}(q) = 1 + q + 2q^2 + 4q^3 + O(q^4),\qquad
F_1^{(3)}(q) = q + 2q^2 + 3q^3 + O(q^4).
\]
Finally, summing over the two terminal states yields the truncated global
generating function:
\[
\bigl(F_0^{(3)}(q) + F_1^{(3)}(q)\bigr)
=
1 + 2q + 4q^2 + 7q^3 + O(q^4).
\]
Thus we recover the initial coefficients
\[
b(0)=1,\quad b(1)=2,\quad b(2)=4,\quad b(3)=7.
\]

\end{itemize}

\end{example}

By direct computation from the recurrence we obtain
\[
\mathcal{F}(q) =
1 + 2q + 4q^2 + 7q^3 + 12q^4 + 19q^5 + 31q^6 + 47q^7 + 72q^8 + 107q^9 + 157q^{10} + \cdots
\]

\section{Recurrence Structure}\label{sec:Recurrence}
The normalized formulation yields coupled first-order recurrences for the state-polynomials \eqref{eq:FirstRec} and \eqref{eq:SecondRec}. 
In this section we uncouple these relations and derive corresponding second-order scalar recurrences.

Let $\widehat F^{(n)}_0(q)$ and $\widehat F^{(n)}_1(q)$ be defined respectively by Relations
\eqref{eq:FirstRec} and \eqref{eq:SecondRec}, with $\widehat F^{(0)}_0=1$ and $\widehat F^{(0)}_1=0$. For $n \ge 0$, set
\[
\alpha_n(q) := \widehat F^{(n)}_0(q),
\qquad
\beta_n(q) := \widehat F^{(n)}_1(q).
\]
For $n \ge 1$, define
\[
d_n(q) := 1 + q - q^n,
\qquad
c_n(q) := q^{2n-1} + q^n - q.
\]

\begin{proposition}\label{prop:uncouple}
We have $\{\alpha_n(q)\}_{n\ge0}$ satisfies the homogeneous second–order recurrence
\begin{equation}\label{eq:a-second}
\alpha_0(q)=1,  \alpha_1(q)=1,\quad \forall n\geq 2:  \alpha_n(q)=d_n(q)\alpha_{n-1}(q)+c_n(q)\alpha_{n-2}(q).
\end{equation}
Moreover, for all $n\geq 0$, we have
\begin{equation}\label{eq:F1-from-a}
\beta_n(q)=\frac{\alpha_{n+1}(q)-\alpha_n(q)}{q^{n+1}}.
\end{equation}
\end{proposition}

\begin{proof}
From Relation \eqref{eq:FirstRec}   we have, for $n\ge 1$,
\[
\beta_{n-1}=\frac{\alpha_n -\alpha_{n-1} }{q^n}.
\]
Apply Relation \eqref{eq:SecondRec} with index $n-1$ and substitute
$ \beta_{n-2}=(\alpha_{n-1}-\alpha_{n-2})/q^{n-1}$ to obtain
\[
\beta_{n-1}
=q^{n-1}\alpha_{n-2}+\frac{1-q^{n-1}}{q^{n-1}}\,(\alpha_{n-1}-\alpha_{n-2}).
\]
Finally, plug this into $\alpha_n=\alpha_{n-1}+q^n\beta_{n-1}$ (Relation \eqref{eq:FirstRec}) and simplify; this gives \eqref{eq:a-second}.
Formula \eqref{eq:F1-from-a} is just the rearrangement of
$\alpha_{n+1}=\alpha_n+q^{n+1}  \beta_{n}$.
\end{proof}

\begin{proposition}\label{prop:beta-homogeneous-shifted}
 
We have $\{\beta_n(q)\}_{n\ge0}$ satisfies the homogeneous second–order recurrence
\begin{equation}\label{eq:beta-rec}
\beta_0(q)=0,\
\beta_1(q)=q,\quad\forall n\geq 2: \beta_{n}(q)
=
d_n(q)\beta_{n-1}(q)
+c_n(q)\beta_{n-2}(q).
\end{equation}
\end{proposition}

\begin{proof}
 
Using the recurrence for $\alpha_{n+2}$, given by \eqref{eq:a-second},  we compute
\begin{align*}
\beta_{n+1}
&=\frac{\alpha_{n+2}-\alpha_{n+1}}{q^{n+2}}
\\&=\frac{(d_{n+2}-1)\alpha_{n+1}+c_{n+2}\alpha_n}{q^{n+2}}\\
&=\Bigl(q^{-(n+1)}-1\Bigr)\alpha_{n+1}+q^{-(n+2)}c_{n+2}\alpha_n,
\end{align*}
since $d_{n+2}-1=q-q^{n+2}$.

From \eqref{eq:F1-from-a} we have $\alpha_n=\alpha_{n+1}-q^{n+1}\beta_n$, hence
\begin{align}
\beta_{n+1}
&=\Bigl(q^{-(n+1)}-1\Bigr)\alpha_{n+1}
+q^{-(n+2)}c_{n+2}\bigl(\alpha_{n+1}-q^{n+1}\beta_n\bigr)\notag\\
&=\Bigl((q^{-(n+1)}-1)+q^{-(n+2)}c_{n+2}\Bigr)\alpha_{n+1}-q^{-1}c_{n+2}\beta_n.
\label{eq:beta-mid}
\end{align}
Now use the identity
\[
(q-q^{n+2})+c_{n+2}=q^{2n+3}
\quad\Longrightarrow\quad
(q^{-(n+1)}-1)+q^{-(n+2)}c_{n+2}=q^{n+1},
\]
to simplify \eqref{eq:beta-mid} into
\begin{equation}\label{eq:beta-mid2}
\beta_{n+1}=q^{n+1}\alpha_{n+1}-q^{-1}c_{n+2}\beta_n.
\end{equation}
Apply the recurrence for $\alpha_{n+1}$, given by \eqref{eq:a-second}, and  substitute $\alpha_n=\alpha_{n+1}-q^{n+1}\beta_n$, and  $\alpha_{n-1}=\alpha_n-q^n\beta_{n-1}=\alpha_{n+1}-q^{n+1}\beta_n-q^n\beta_{n-1}$ (which follow from \eqref{eq:F1-from-a} for $n$ and $n-1$). This yields
\[
\alpha_{n+1}=(d_{n+1}+c_{n+1})\alpha_{n+1}-q^{n+1}(d_{n+1}+c_{n+1})\beta_n-c_{n+1}q^n\beta_{n-1}.
\]
Rearranging and using $d_{n+1}+c_{n+1}=1+q^{2n+1}$, and $1-d_{n+1}-c_{n+1}=-q^{2n+1}$, we obtain
\begin{equation}\label{eq:an1-in-beta}
\alpha_{n+1}=q^{-n}(1+q^{2n+1})\beta_n+c_{n+1}q^{-(n+1)}\beta_{n-1}.
\end{equation}
From \eqref{eq:beta-mid2} and \eqref{eq:an1-in-beta},
\begin{align*}
\beta_{n+1}
&=q^{n+1}\Bigl(q^{-n}(1+q^{2n+1})\beta_n+c_{n+1}q^{-(n+1)}\beta_{n-1}\Bigr)
-q^{-1}c_{n+2}\beta_n\\
&=\Bigl(q(1+q^{2n+1})-q^{-1}c_{n+2}\Bigr)\beta_n+c_{n+1}\beta_{n-1}.
\end{align*}
Finally, since $q^{-1}c_{n+2}=q^{2n+3}+q^{n+2}-1$, we have
\[
q(1+q^{2n+1})-q^{-1}c_{n+2}=1+q-q^{n+1},
\]
and therefore
\[
\beta_{n+1}=(1+q-q^{n+1})\beta_n+c_{n+1}\beta_{n-1}.
\]
Replacing $c_{n+1}$ by its expression $q^{2n+1}+q^{n+1}-q$ gives \eqref{eq:beta-rec}.

Regarding the initial values, we have  $\beta_0=\dfrac{\alpha_1-\alpha_0}{q}=0$, and 
\[
\alpha_2=(1+q-q^2)\alpha_1+(q^3+q^2-q)\alpha_0=1+q^3
\quad\Longrightarrow\quad
\beta_1=\frac{\alpha_2-\alpha_1}{q^2}=q.\qedhere
\]
\end{proof}

 Tables~\ref{tab:F0hat-coeff} and~\ref{tab:F1hat-coeff} 
list the coefficients of the polynomials 
$\alpha_n(q)=\widehat F^{(n)}_0(q)$ and $\beta_n(q)=\widehat F^{(n)}_1(q)$ 
for $0\le n\le 10$, truncated at $q^{12}$. As we can see, the degrees grow roughly quadratically in $n$, 
reflecting the accumulation of contributions from the factors $q^j$ 
in the transfer-matrix product.
% \begin{table}[H]
% \centering
% \caption{First values of $\widehat F^{(n)}_0(q)$ for $0\le n\le 10$.}
% \begin{tabular}{c|l}
% $n$ & $\widehat F^{(n)}_0(q)$ \\ \hline
% 0 & $1$ \\[3pt]
% 1 & $1$ \\[3pt]
% 2 & $1 + q^2$ \\[3pt]
% 3 & $1 + q^2 + q^3$ \\[3pt]
% 4 & $1 + q^2 + q^3 + q^4 + q^6$ \\[3pt]
% 5 & $1 + q^2 + q^3 + q^4 + q^5 + q^6 + q^7 + q^9$ \\[3pt]
% 6 & $1 + q^2 + q^3 + q^4 + q^5 + 2q^6 + q^7 + q^8 + q^9 + q^{10} + q^{12}$ \\[3pt]
% 7 & $1 + q^2 + q^3 + q^4 + q^5 + 2q^6 + 2q^7 + q^8 + 2q^9 + q^{10} + q^{11} + q^{12} + q^{13} + q^{15}$ \\[3pt]
% 8 & $1 + q^2 + q^3 + q^4 + q^5 + 2q^6 + 2q^7 + 2q^8 + 2q^9 + 2q^{10} + q^{11} + 2q^{12} + q^{13} + q^{14} + q^{15} + q^{16} + q^{18}$ \\[3pt]
% 9 & (polynomial of degree $20$; omitted for brevity) \\[3pt]
% 10 & (polynomial of degree $22$; omitted for brevity)
% \end{tabular}
% \end{table}

% \begin{table}[H]
% \centering
% \caption{First values of $\widehat F^{(n)}_1(q)$ for $0\le n\le 10$.}
% \begin{tabular}{c|l}
% $n$ & $\widehat F^{(n)}_1(q)$ \\ \hline
% 0 & $0$ \\[3pt]
% 1 & $q$ \\[3pt]
% 2 & $q + q^2 - q^3$ \\[3pt]
% 3 & $q + q^2 + q^3 - q^4 - q^5$ \\[3pt]
% 4 & $q + q^2 + q^3 + q^4 + q^5 - q^6 - q^7$ \\[3pt]
% 5 & $q + q^2 + q^3 + q^4 + q^5 + q^6 + q^7 - q^8 - q^9$ \\[3pt]
% 6 & $q + q^2 + q^3 + q^4 + q^5 + 2q^6 + q^7 + q^8 - q^9 - q^{10}$ \\[3pt]
% 7 & (polynomial of degree $15$; omitted for brevity) \\[3pt]
% 8 & (polynomial of degree $18$; omitted for brevity) \\[3pt]
% 9 & \dots \\[3pt]
% 10 & \dots
% \end{tabular}
% \end{table}

\begin{table}[H]

\begin{tabular*}{\linewidth}{@{\extracolsep{\fill}}l|ccccccccccccc@{}}
\toprule
$n$ & $q^0$ & $q^1$ & $q^2$ & $q^3$ & $q^4$ & $q^5$ & $q^6$ & $q^7$ & $q^8$ & $q^9$ & $q^{10}$ & $q^{11}$ & $q^{12}$ \\
\midrule
0  & 1 & 0 & 0 & 0 & 0 & 0 & 0 & 0 & 0 & 0 & 0 & 0 & 0 \\
1  & 1 & 0 & 0 & 0 & 0 & 0 & 0 & 0 & 0 & 0 & 0 & 0 & 0 \\
2  & 1 & 0 & 0 & 1 & 0 & 0 & 0 & 0 & 0 & 0 & 0 & 0 & 0 \\
3  & 1 & 0 & 0 & 1 & 1 & 1 & -1 & 0 & 0 & 0 & 0 & 0 & 0 \\
4  & 1 & 0 & 0 & 1 & 1 & 2 & 0 & 0 & -1 & -1 & 2 & 0 & 0 \\
5  & 1 & 0 & 0 & 1 & 1 & 2 & 1 & 1 & -1 & -1 & 0 & 1 & 1 \\
6  & 1 & 0 & 0 & 1 & 1 & 2 & 1 & 2 & 0 & -1 & 0 & 0 & 1 \\
7  & 1 & 0 & 0 & 1 & 1 & 2 & 1 & 2 & 1 & 0 & 0 & 0 & 0 \\
8  & 1 & 0 & 0 & 1 & 1 & 2 & 1 & 2 & 1 & 1 & 1 & 0 & 0 \\
9  & 1 & 0 & 0 & 1 & 1 & 2 & 1 & 2 & 1 & 1 & 2 & 1 & 0 \\
10 & 1 & 0 & 0 & 1 & 1 & 2 & 1 & 2 & 1 & 1 & 2 & 2 & 1 \\
\bottomrule
\end{tabular*}
\caption{Coefficients $u_{n,k}$ in $\widehat F^{(n)}_0(q)=\sum_{k\ge0} u_{n,k}q^k$ (truncated at $q^{12}$).}\label{tab:F0hat-coeff}
\end{table}

\begin{table}[H]

\begin{tabular*}{\linewidth}{@{\extracolsep{\fill}}l|ccccccccccccc}
\toprule
$n$ & $q^0$ & $q^1$ & $q^2$ & $q^3$ & $q^4$ & $q^5$ & $q^6$ & $q^7$ & $q^8$ & $q^9$ & $q^{10}$ & $q^{11}$ & $q^{12}$ \\
\midrule
0  & 0 & 0 & 0 & 0 & 0 & 0 & 0 & 0 & 0 & 0 & 0 & 0 & 0 \\
1  & 0 & 1 & 0 & 0 & 0 & 0 & 0 & 0 & 0 & 0 & 0 & 0 & 0 \\
2  & 0 & 1 & 1 & -1 & 0 & 0 & 0 & 0 & 0 & 0 & 0 & 0 & 0 \\
3  & 0 & 1 & 1 & 0 & -1 & -1 & 2 & 0 & 0 & 0 & 0 & 0 & 0 \\
4  & 0 & 1 & 1 & 0 & 0 & -2 & 1 & 1 & 2 & 2 & -3 & 0 & 0 \\
5  & 0 & 1 & 1 & 0 & 0 & -1 & 0 & 0 & 3 & 3 & 1 & -1 & -1 \\
6  & 0 & 1 & 1 & 0 & 0 & -1 & 1 & -1 & 2 & 4 & 2 & 2 & 0 \\
7  & 0 & 1 & 1 & 0 & 0 & -1 & 1 & 0 & 1 & 3 & 3 & 3 & 3 \\
8  & 0 & 1 & 1 & 0 & 0 & -1 & 1 & 0 & 2 & 2 & 2 & 4 & 4 \\
9  & 0 & 1 & 1 & 0 & 0 & -1 & 1 & 0 & 2 & 3 & 1 & 3 & 5 \\
10 & 0 & 1 & 1 & 0 & 0 & -1 & 1 & 0 & 2 & 3 & 2 & 2 & 4 \\
\bottomrule
\end{tabular*}
\caption{Coefficients $v_{n,k}$ in $\widehat F^{(n)}_1(q)=\sum_{k\ge0} v_{n,k}q^k$ (truncated at $q^{12}$).}\label{tab:F1hat-coeff}
\end{table}

Note that the normalized polynomials  $\alpha_n(q)$ and  $\beta_n(q)$ may have negative coefficients (as shown in Tables \ref{tab:F0hat-coeff} and \ref{tab:F1hat-coeff}); this is a consequence of extracting the Euler product.

\section{Determinantal  Representation}\label{sec:Determinantel}
Second-order recurrences with variable coefficients naturally admit determinantal representations in terms of continuants. 
We now show that both $\alpha_n(q)$ and $\beta_n(q)$ admit explicit tridiagonal determinant formulas.

\begin{proposition}\label{prop:an-determinant}

For all $m\geq 0$, define the $m\times m$ tridiagonal matrix
\begin{equation}\label{eq:MatriX}
    C_m(q):=
\begin{pmatrix}
d_1(q) & 1      &        &        & 0\\
-c_2(q)& d_2(q) & 1      &        &  \\
       & -c_3(q)& d_3(q) & \ddots &  \\
       &        & \ddots & \ddots & 1\\
0      &        &        & -c_m(q)& d_m(q)
\end{pmatrix}.
\end{equation}
Then, for all $n\geq0$,  the  sequence $\{\alpha_n(q)\}_{n\ge 0}$ 
is given by the closed explicit formula
\begin{equation}
    \alpha_n(q)=\det C_n(q),
\end{equation}
where by convention $\det C_0(q)=1$.
\end{proposition}

\begin{proof}
Set $K_n(q):=\det C_n(q)$ with $K_0(q):=1$. Expanding $\det C_n(q)$ along the last row
(or using the standard continuant recurrence for tridiagonal determinants) yields, for $n\ge 2$,
\[
K_n(q)=d_n(q)\,K_{n-1}(q)-\bigl(-c_n(q)\bigr)\,K_{n-2}(q)
      =d_n(q)\,K_{n-1}(q)+c_n(q)\,K_{n-2}(q).
\]
Moreover,
\[
K_1(q)=\det[d_1(q)]=d_1(q)=1+q-q=1.
\]
Thus $\{K_n(q)\}$ satisfies the same recurrence and initial conditions as $\{\alpha_n(q)\}$, hence
$K_n(q)=\alpha_n(q)$ for all $n\ge 0$.
\end{proof}

\begin{remark}
Equivalently, in first-order form one may write
\[
\binom{\alpha_n(q)}{\alpha_{n-1}(q)}
=
\begin{pmatrix}
d_n(q) & c_n(q)\\
1      & 0
\end{pmatrix}
\binom{\alpha_{n-1}(q)}{\alpha_{n-2}(q)}
=
\left(\prod_{k=2}^n
\begin{pmatrix}
1+q-q^k & q^{2k-1}+q^k-q\\
1 & 0
\end{pmatrix}\right)\binom{1}{1},
\]
so $\alpha_n(q)$ is also the (unique) continuant associated with the coefficients
$\{d_k(q)\},\{c_k(q)\}$.
As a check, at $q=1$ the recurrence becomes $\alpha_n=\alpha_{n-1}+\alpha_{n-2}$, hence
$\alpha_n(1)=F_{n+1}$.
\end{remark}

\begin{proposition}\label{prop:beta-determinant}
For all $m\geq 1$, define the $m\times m$ tridiagonal matrix
\begin{equation}
    D_m(q):=
\begin{pmatrix}
d_2(q) & 1      &        &        & 0\\
-c_3(q)& d_3(q) & 1      &        &  \\
       & -c_4(q)& d_4(q) & \ddots &  \\
       &        & \ddots & \ddots & 1\\
0      &        &        & -c_{m+1}(q)& d_{m+1}(q)
\end{pmatrix}.
\end{equation}
with the convention $\det D_0(q)=1$.

Then, for all $n\geq1$,  the  sequence $\{\beta_n(q)\}_{n\ge 1}$ 
is given by the closed explicit formula
\begin{equation}\label{eq:beta-det}
\beta_n(q)=q\,\det D_{n-1}(q),
\end{equation}
Equivalently, if $C_n(q)$ is the matrix given by \eqref{eq:MatriX}, then for all $n\geq 0$, we have 
\begin{equation}\label{eq:beta-det-diff}
\beta_n(q)=\frac{\det C_{n+1}(q)-\det C_n(q)}{q^{n+1}}.
\end{equation}
\end{proposition}

\begin{proof}
Set, for $m\ge0$, $K_m(q):=\det D_m(q)$ with $K_0(q):=1$. Expanding $\det D_m(q)$ along the last row (continuant recurrence) yields, for $m\ge2$,
\[
K_m(q)=d_{m+1}(q)\,K_{m-1}(q)+c_{m+1}(q)\,K_{m-2}(q),
\]
and $K_1(q)=d_2(q)$. Multiplying by $q$ and reindexing via $n=m+1$ gives that
the sequence $qK_{n-1}$ satisfies \eqref{eq:beta-rec} and has
\[
qK_0=q=\beta_1,\qquad qK_1=qd_2=d_2\beta_1+c_2\beta_0=\beta_2.
\]
By uniqueness of solutions to \eqref{eq:beta-rec} with given initial values, we conclude
$\beta_n=qK_{n-1}=q\det D_{n-1}$ for all $n\ge1$, proving \eqref{eq:beta-det}.

Finally, \eqref{eq:beta-det-diff} follows immediately from Proposition~\ref{prop:an-determinant},
which gives $\alpha_n=\det C_n$, substituted into \eqref{eq:F1-from-a}.
\end{proof}

\begin{remark}
The sequence $\beta_n(q)/q$ is the continuant associated with the
shifted coefficient sequences $\{d_k(q)\}_{k\ge2}$ and $\{c_k(q)\}_{k\ge3}$.
Equivalently, it is the determinant of the principal submatrix obtained
from $C_{n+1}(q)$ by deleting its first row and first column.
Thus $\beta_n$ appears naturally as a minor of the continuant matrix
that represents $\alpha_n$.
\end{remark}

\section{Continued Fraction Formulation}\label{sec:ContinuerFraction}
The normalized two-state recurrence can also be expressed in linear-fractional form. 
This perspective leads to a continued-fraction representation for finite truncations of the generating function and connects the model to classical convergent theory.

Recall the normalized transfer matrices \eqref{eq:NormlizedMatrix} and   the normalized row-vectors \eqref{eq:NormlizedVector}. The finite truncation of the full generating function is defined by
\begin{equation}\label{eq:FN-def}
F_N(q):=(1,0)\Bigl(\prod_{j=1}^N M_j(q)\Bigr)\binom11
=\frac{1}{(q)_N}\bigl(\widehat F^{(N)}_0+\widehat F^{(N)}_1\bigr),
\end{equation}
with $(q)_N:=\prod_{j=1}^N(1-q^j)$.
\begin{proposition}\label{prop:mobius-cf}
Set $r_n(q):=\dfrac{\widehat F^{(n)}_1(q)}{\widehat F^{(n)}_0(q)}$, with $r_0(q)=0$. 
Then for every $n\ge 1$ we have the   (linear-fractional) recurrence
\begin{equation}\label{eq:rn-mobius}
r_n(q)=G_n\bigl(r_{n-1}(q)\bigr),
\end{equation}
with 
\begin{equation}
    G_n(x):=\frac{q^n+(1-q^n)x}{1+q^n x}.
\end{equation}
Equivalently, $r_n$ admits the explicit finite continued fraction by nesting
\begin{equation}\label{eq:rn-nested}
r_n(q)=
\cfrac{q^n+(1-q^n)\,\cfrac{q^{n-1}+(1-q^{n-1})\,\cfrac{\ddots\ \cfrac{q^2+(1-q^2)\,\cfrac{q}{1}}{1+q^2\,\cfrac{q}{1}}\ \ddots}{1+q^{n-1}(\cdots)}}{1+q^{n-1}(\cdots)}}
{1+q^n\,\cfrac{q^{n-1}+(1-q^{n-1})\,\cfrac{\ddots\ \cfrac{q^2+(1-q^2)\,\cfrac{q}{1}}{1+q^2\,\cfrac{q}{1}}\ \ddots}{1+q^{n-1}(\cdots)}}{1+q^{n-1}(\cdots)}}.
\end{equation}
Moreover,
\begin{equation}\label{eq:FN-in-rn}
F_N(q)=\frac{1}{(q)_N}\,\widehat F^{(N)}_0(q)\,\bigl(1+r_N(q)\bigr).
\end{equation}
\end{proposition}
\begin{proof}
From the normalized two-state recurrence \eqref{eq:FirstRec} and \eqref{eq:SecondRec}, divide the second identity by the first to obtain
\[
r_n=\frac{q^n+(1-q^n)r_{n-1}}{1+q^n r_{n-1}}=G_n(r_{n-1}),
\]
which is \eqref{eq:rn-mobius}. Iterating from $r_0=0$ yields the nested expression
\eqref{eq:rn-nested}. Finally, \eqref{eq:FN-in-rn} follows from
\eqref{eq:FN-def} by factoring $\widehat F^{(N)}_0$ from
$\widehat F^{(N)}_0+\widehat F^{(N)}_1=\widehat F^{(N)}_0(1+r_N)$.
\end{proof}

The iteration \eqref{eq:rn-mobius} shows that $r_n$ is a rational function in $q$ (a finite
convergent). In particular, writing
\[
r_n(q)=\frac{P_n(q)}{Q_n(q)},
\]
the pair $(P_n,Q_n)$ can be recovered by multiplying the $2\times 2$ matrices encoding
the   maps $G_n$, so that $\{P_n/Q_n\}_{n\ge 0}$ forms a natural sequence of convergents
associated with the normalized transfer-matrix product.

Proposition~\ref{prop:mobius-cf} improves the presentation in three complementary ways:
\begin{enumerate}
\item It makes transparent that the normalized transfer-matrix product produces a sequence of
\emph{finite convergents} (rational functions in $q$), namely the iterates $r_n(q)$ and the truncations
$F_N(q)$.
\item It clarifies why a ``closed form'' (in the sense of a simple finite product or a single
$q$-hypergeometric term) is not automatic: after uncoupling, one obtains a scalar recurrence with
\emph{$n$-dependent coefficients} through the powers $q^n$.
\item It nevertheless yields a clean analytic object: $r_n$ is obtained by \emph{iterated  
transformations} (equivalently, a nested finite continued fraction), providing a standard
continued-fraction/convergents framework for further analysis.
\end{enumerate}

\begin{corollary}\label{cor:FN-delta}
Let $F_N(q)$ be defined by \eqref{eq:FN-def}. For all $N\geq 1$, define
\begin{equation}
    \Delta_N(q):=(q)_N F_N(q)-(q)_{N-1}F_{N-1}(q),
\end{equation}
with $(q)_0F_0=1$ and $\Delta_1(q)=q$.
Then for all $N\ge 2$ the sequence $\Delta_N(q)$ satisfies the explicit second-order recurrence
\begin{equation}\label{eq:delta-second}
\Delta_{N+1}(q)
= q\bigl(1+q-q^{N}\bigr)\,\Delta_{N}(q)
+ q^2\bigl(q^{2N-1}+q^{N}-q\bigr)\,\Delta_{N-1}(q).
\end{equation}
Moreover, the convergents are recovered by
\begin{equation}\label{eq:FN-from-delta}
(q)_N F_N(q)=1+\sum_{j=1}^N \Delta_j(q),
\qquad\text{equivalently}\qquad
F_N(q)=\frac{1+\sum_{j=1}^N \Delta_j(q)}{(q)_N}.
\end{equation}
\end{corollary}

\begin{proof}
Write $(\widehat F^{(N)}_0,\widehat F^{(N)}_1)$ for the normalized state-sums, so that
$(q)_N F_N=\widehat F^{(N)}_0+\widehat F^{(N)}_1$ by \eqref{eq:FN-def}.
From the first normalized recurrence
$\widehat F^{(N)}_0=\widehat F^{(N-1)}_0+q^N\widehat F^{(N-1)}_1$ we obtain
\[
(q)_N F_N-(q)_{N-1}F_{N-1}
=\bigl(\widehat F^{(N)}_0+\widehat F^{(N)}_1\bigr)
-\bigl(\widehat F^{(N-1)}_0+\widehat F^{(N-1)}_1\bigr)
=q^N\widehat F^{(N-1)}_0,
\]
hence $\Delta_N=q^N\widehat F^{(N-1)}_0$.
Therefore $\Delta_{N+1}/q^{N+1}=\widehat F^{(N)}_0$ satisfies the uncoupled second-order
recurrence for $\widehat F^{(N)}_0$ (Proposition~\ref{prop:uncouple}); multiplying that
recurrence by $q^{N+1}$ yields \eqref{eq:delta-second}.
Finally, \eqref{eq:FN-from-delta} is immediate by telescoping:
$(q)_N F_N=(q)_0F_0+\sum_{j=1}^N\bigl((q)_jF_j-(q)_{j-1}F_{j-1}\bigr)$.
\end{proof}

\begin{remark}
The quantity $\Delta_N(q)=q^N\widehat F^{(N-1)}_0(q)$ has a direct combinatorial interpretation:
it is precisely the total generating function weight of all block-separated overpartitions
whose largest part is $N$, obtained by taking any admissible configuration up to size $N-1$
ending in state $0$ (i.e., whose last present block is plain) and adjoining a plain block
of size $N$.
\end{remark}

\section{Growth and Asymptotic Behavior}\label{sec:Asymptotics}

We conclude the structural analysis by determining the asymptotic growth of the counting function $b(n)$. 
Using the Euler factorization and convolution methods for $q$-series, we show that block-separated overpartitions share the same exponential growth rate as ordinary partitions.  Our analysis is based on the factorization
\begin{equation}\label{eq:factorization}
F(q)=\sum_{n\ge0} b(n)q^n
=
\frac{H(q)}{(q)_\infty},
\end{equation}
where $(q)_\infty=\prod_{j\ge1}(1-q^j)$ and
\begin{equation}
    H(q)=\widehat F^{(\infty)}_0(q)+\widehat F^{(\infty)}_1(q).
\end{equation}
\begin{proposition}[Analytic structure of $H(q)$]\label{prop:Hanalytic}
The function $H(q)$ is analytic for $|q|<1$ and admits a finite limit
\begin{equation}
    H(1):=\lim_{q\to1^-}H(q)\in(0,\infty).
\end{equation}
\end{proposition}

\begin{proof}
Each finite approximation $\widehat F^{(n)}_0(q)$ and
$\widehat F^{(n)}_1(q)$ is a polynomial in $q$.
For $|q|<1$, the infinite product defining $H(q)$ converges coefficientwise,
hence $H(q)$ is analytic in the open unit disc.

Moreover, for $q\in(0,1)$, all coefficients are nonnegative and
$F(q)$ is finite.
Since $(q)_\infty^{-1}\to\infty$ as $q\to1^-$,
Equation~\eqref{eq:factorization} implies that $H(q)$
must remain bounded near $q=1$.
Monotonicity in $q$ then ensures the existence of a finite limit $H(1)>0$.
\end{proof}

It is classical (Hardy--Ramanujan \cite{Andrews_1984}) that the partition function satisfies
\begin{equation}
    p(n)\sim
\frac{1}{4\sqrt3\,n}
\exp\!\left(\pi\sqrt{\frac{2n}{3}}\right).
\end{equation}
This is of the subexponential form
\[
p(n)\sim
v\, n^{-1}\exp\!\left(r n^{1/2}\right),
\qquad
v=\frac{1}{4\sqrt3},
\quad
r=\pi\sqrt{\frac{2}{3}}.
\]

Since $H(q)$ is analytic at $q=1$,
its coefficients $h(n)$ grow at most polynomially.
Equation~\eqref{eq:factorization} implies
\[
b(n)=\sum_{k=0}^n h(k)\,p(n-k),
\]
i.e., $b(n)$ is the convolution of $p(n)$ with a polynomially growing sequence.

By Theorem~1 of Kotěšovec~\cite{kotesovec2015method}
(applied with $p=\tfrac12$),
the convolution of a subexponential sequence
$v n^{-1} e^{r\sqrt n}$ with a polynomially bounded sequence
preserves the same exponential rate and modifies only the leading constant.
Therefore, we have the following main asymptotic formula.

\begin{theorem}
As $n\to\infty$,
\[
b(n)\sim H(1)\,p(n).
\]
Equivalently,
\[
b(n)\sim
\frac{H(1)}{4\sqrt3\,n}
\exp\!\left(\pi\sqrt{\frac{2n}{3}}\right).
\]
\end{theorem}

\begin{proof}
Write $b(n)=\sum_{k=0}^n h(k)p(n-k)$.
Since $h(k)$ grows at most polynomially and
$p(n-k)$ has subexponential form
$v(n-k)^{-1}\exp(r\sqrt{n-k})$,
Theorem~1 of~\cite{kotesovec2015method}
implies that the dominant contribution arises from small $k$,
and that
\[
b(n)\sim
\left(\sum_{k\ge0}h(k)\right)p(n).
\]
But
\[
\sum_{k\ge0}h(k)=H(1),
\]
which completes the proof.
\end{proof}

\begin{corollary}
We have
\[
\log b(n)\sim
\pi\sqrt{\frac{2n}{3}}.
\]
In particular, block-separated overpartitions have the same exponential
growth scale as ordinary partitions.
\end{corollary}

Thus the Fibonacci-type decoration mechanism affects only the
subexponential structure of the counting function, while the
dominant exponential behaviour remains governed by the classical
Euler product.

\section{Conclusion and Further Directions}\label{sec:conclusion}

We introduced block-separated overpartitions as a natural intermediate family between classical partitions and unrestricted overpartitions. 
Although the defining restriction is purely local — forbidding two consecutive distinct blocks from being simultaneously overlined — it generates a remarkably rich global structure. 
The resulting model combines an Euler-type multiplicative structure with a Fibonacci-type internal decoration mechanism, producing a hybrid object that is both combinatorially   and analytically structured.

The model naturally suggests several directions for further investigation. 
One may consider $k$-separated variants in which runs of $k$ consecutive overlined blocks are forbidden, leading to higher-order Fibonacci-type recurrences and $k$-step automata. 
Refined enumerations tracking the number of overlined blocks or the number of distinct part-sizes could produce multivariate generating functions with richer analytic behavior. 
From a $q$-series perspective, the factorization $F(q)=H(q)/(q)_\infty$ raises the question of whether $H(q)$ admits modular-type properties or can be embedded into known $q$-hypergeometric frameworks. 
Finally, the independent-set interpretation points toward possible bijective connections with lattice-path models, tilings, or other restricted combinatorial structures. Block-separated overpartitions therefore provide a simple yet structurally rich refinement of overpartitions.

%  \section*{Statements and Declarations}

% \subsection*{Funding}
% The author  declares that no funds, grants, or other support were received during the preparation of this manuscript.

% \subsection*{Competing Interests}
% The author declares that they have no conflict of interest.

% \subsection*{Author Contributions}

% The single author contributed to the entirety of the paper.

% \subsection*{Data Availability}
% No data were used in this study.

 \bibliographystyle{plainurl} 

\bibliography{bibou}

\end{document}